\newcommand{\KK}{\mathbb{K}}
\newcommand{\NN}{\mathbb{N}}
\newcommand{\set}[1]{\{#1\}}
\newcommand{\with}{\,:\,}
\DeclareMathOperator{\sdepth}{sdepth}
\DeclareMathOperator{\depth}{depth}
\theoremstyle{plain}
\newtheorem{thm}{Theorem}[section]
\newtheorem{prop}[thm]{Proposition}
\newtheorem{lem}[thm]{Lemma}
\newtheorem{cor}[thm]{Corollary}
\theoremstyle{definition}
\newtheorem{dfn}[thm]{Definition}
\newtheorem{dfns}[thm]{Definitions}
\newtheorem{exmp}[thm]{Example}
\newtheorem{exmps}[thm]{Examples}
\newtheorem{rem}[thm]{Remark}
\newtheorem{dfns-rems}[thm]{Definitions and Remarks}
\newtheorem{notas-rems}[thm]{Notations and Remarks}
\newtheorem{exmps-rems}[thm]{Examples and Remarks}
\begin{document}

% ------------------------------------------------------------------------

\title[Two lower bounds for the Stanley depth]{Two lower bounds for the Stanley depth of monomial ideals}

% ------------------------------------------------------------------------

\author{L. Katth\"an}

\address{L. Katth\"an, Universit\"at Osnabr\"uck, FB Mathematik/Informatik, 49069 Osnabr\"uck, Germany}
\email{lkatthaen@uos.de}

\author[S. A. Seyed Fakhari]{S. A. Seyed Fakhari}

\address{S. A. Seyed Fakhari, School of Mathematics, Institute for Research
in Fundamental Sciences (IPM), P.O. Box 19395-5746, Tehran, Iran.}

\email{fakhari@ipm.ir}

\urladdr{http://math.ipm.ac.ir/fakhari/}

% ------------------------------------------------------------------------

\begin{abstract}
Let $J\varsubsetneq I$ be two monomial ideals of the polynomial ring $S=\mathbb{K}[x_1,\ldots,x_n]$. In this paper, we provide two lower bounds for the Stanley depth of $I/J$. On the one hand, we introduce the notion of lcm number of $I/J$, denoted by $l(I/J)$, and prove that the inequality ${\rm sdepth}(I/J)\geq n-l(I/J)+1$ hold.
On the other hand, we show that ${\sdepth}(I/J)\geq n-\dim L_{I/J}$, where $\dim L_{I/J}$ denotes the order dimension of the lcm lattice of $I/J$.
We show that $I$ and $S/I$ satisfy Stanley's conjecture, if either the lcm number of $I$ or the order dimension of the lcm lattice of $I$ is small enough. Among other results, we also prove that the Stanley--Reisner ideal of a vertex decomposable simplicial complex satisfies Stanley's conjecture.
\end{abstract}

% ------------------------------------------------------------------------

\subjclass[2000]{Primary: 13C15, 05E99; Secondary: 13C13}

% ------------------------------------------------------------------------

\keywords{Monomial ideal, Stanley depth, lcm number, lcm lattice, Order dimension, Simplicial complex}

% ------------------------------------------------------------------------

\thanks{The first author was partially supported by the German Research Council DFG-GRK~1916. The second author was in part supported by a grant from IPM (No. 93130422)}

% ------------------------------------------------------------------------

\maketitle

%%%%%%%%%%%%%%%%%%%%%%%%%%%%%%%%%%%%%%%%%%%%%%%%%%%%%%%%%%%%%%%%%%%%%%%%%%

\section{Introduction} \label{sec1}

Let $\mathbb{K}$ be a field and $S=\mathbb{K}[x_1,\dots,x_n]$ be the
polynomial ring in $n$ variables over the field $\mathbb{K}$. Let $M$ be a nonzero
finitely generated $\mathbb{Z}^n$-graded $S$-module. Let $u\in M$ be a
homogeneous element and $Z\subseteq \{x_1,\dots,x_n\}$. The $\mathbb
{K}$-subspace $u\mathbb{K}[Z]$ generated by all elements $uv$ with $v\in
\mathbb{K}[Z]$ is called a {\it Stanley space} of dimension $|Z|$, if it is
a free $\mathbb{K}[\mathbb{Z}]$-module. Here, as usual, $|Z|$ denotes the
number of elements of $Z$. A decomposition $\mathcal{D}$ of $M$ as a finite
direct sum of Stanley spaces is called a {\it Stanley decomposition} of
$M$. The minimum dimension of a Stanley space in $\mathcal{D}$ is called
{\it Stanley depth} of $\mathcal{D}$ and is denoted by ${\rm
sdepth}(\mathcal {D})$. The quantity $${\rm sdepth}(M):=\max\big\{{\rm
sdepth}(\mathcal{D})\mid \mathcal{D}\ {\rm is\ a\ Stanley\ decomposition\
of}\ M\big\}$$ is called {\it Stanley depth} of $M$. Stanley \cite{s}
conjectured that $${\rm depth}(M) \leq {\rm sdepth}(M)$$ for all
$\mathbb{Z}^n$-graded $S$-modules $M$. For a reader friendly introduction
to Stanley decomposition, we refer to \cite{psty} and for a nice survey on this topic we refer to \cite{h}. In this paper we prove Stanley's conjecture for some classes of monomial ideals.

Before stating the main results of this paper, we mention that for the monomials $u_1, \ldots, u_k \in S$, we denote their least common multiple by ${\rm lcm}(u_1, u_2, \ldots, u_k)$. Also, for a monomial ideal $I$, we denote by $G(I)$ the set of minimal monomial generators of $I$.

\begin{dfn}
Let $J\varsubsetneq I\subseteq S=\mathbb{K}[x_1,\ldots,x_n]$ be monomial ideals. The {\it lcm number} of $I/J$, denoted by $l(I/J)$ is the maximum integer $t$, for which there exist monomials $u_1, \ldots, u_t\in G(I)\cup G(J)$ such that $$u_1\neq {\rm lcm}(u_1, u_2)\neq \ldots \neq {\rm lcm}(u_1, u_2, \ldots, u_t).$$
\end{dfn}

\begin{rem}
We mention that the lcm number of monomial ideals was first considered by Terai to determine an upper bound for the arithmetical rank of squarefree monomial ideals (see \cite[Corollary 4]{k}).
\end{rem}

Let $J\varsubsetneq I$ be two monomial ideals. In Section \ref{sec2}, we determine lower bounds for the Stanley depth of $I/J$. More explicit, we prove that ${\rm sdepth}(I/J)\geq n-l(I/J)+1$. This, in particular, implies that $${\rm sdepth}(I)\geq n-l(I) +1 \ \ \  {\rm and} \ \ \  {\rm sdepth}(S/I)\geq n-l(I).$$

\begin{dfn}
Let $J\varsubsetneq I\subseteq S=\mathbb{K}[x_1,\ldots,x_n]$ be monomial ideals. The {\it lcm lattice} of $I/J$, denoted by $L_{I/J}$, is the set of all least common multiples of non-empty subsets of $G(I)\cup G(J)$, ordered by divisibility and augmented with an additional minimal element $\hat{0}$.
Moreover, we set $L_I := L_{I/0}$.
\end{dfn}
\begin{rem}
The lcm-lattice of a monomial ideal was introduced by Gasharov, Peeva and Welker in \cite{GPW}.
Note that the lcm number $l(I/J)$ is \emph{length} of $L_{I/J}$, i.e. one less than the maximal number of elements of a maximal chain in $L_{I/J}$.
\end{rem}
\begin{dfns}
\begin{enumerate}
\item Let $P$ and $P'$ be finite posets.
An \emph{embedding} is a map $j: P \rightarrow P'$ between two posets such that $p \leq q$ if and only if $j(p) \leq j(q)$ for $p,q\in P$.
\item The \emph{order dimension} of a poset, $\dim P$, is the minimal $d \in \NN$, such that there exists an embedding $P \rightarrow \NN^d$.
\end{enumerate}
\end{dfns}
Note that an embedding is necessarily injective and monotonic. Even if $P$ and $P'$ are lattices we do not require an embedding to respect the join.
We refer the reader to \cite{t} for background information about the dimension of posets.

Let $J\varsubsetneq I$ be two monomial ideals. In Section \ref{secDim}, we give a lower bound for the Stanley depth of $I/J$. Namely, we prove that ${\rm sdepth}(I/J)\geq n-\dim L_{I/J}$ and that ${\rm sdepth}(I)\geq n-\dim L_{I}+1$.

\begin{rem}
Both lower bounds for the Stanley depth are known to be bounds for the usual depth, in the case $I =S$. Indeed,
let $J \subset S$ be a monomial ideal.
By \cite{GPW} the projective dimension of $S/J$ can be computed from the homology of the order complex of lower intervals in $L_J$.
It is easy to see that the dimension of these order complexes is bound above by the lcm number $l(S/J)$. Hence \cite[Theorem 2.1]{GPW} implies that
\[ {\rm depth}(S/J)\geq n-l(S/J) \]
Moreover, it follows easily from Theorem 1 of \cite{rw} that
\[ {\rm depth}(S/J)\geq n-\dim L_J. \]
We provide proofs of both bounds on the general case below.
\end{rem}

In Section \ref{sec3}, we show that the Stanley--Reisner ideal of a vertex decomposable simplicial complex satisfies Stanley's conjecture (see Theorem \ref{decom}). Using this result and the above inequalities, we prove that $I$ and $S/I$ satisfy Stanley's conjecture provided that $l(I)\leq 3$ or $\dim L_I \leq 3$. (see Theorem \ref{degconj}).

%%%%%%%%%%%%%%%%%%%%%%%%%%%%%%%%%%%%%%%%%%%%%%%%%%%%%%%%%%%%%%%%%%%%%%%%%%

\section{A lower bound for the Stanley depth} \label{sec2}

Let $J\varsubsetneq I$ be two monomial ideals. In this section, we prove the first main result of this paper. Indeed, in Theorem \ref{main}, we determine a lower bound for the Stanley depth of $I/J$. In \cite{s2}, the author provides linear algebraic lower bonds for the Stanley depth of $I$ and the Stanley depth of $S/I$, where $I$ is squarefree monomial ideal. The bound which will be proven in Theorem \ref{main} is stronger than these mentioned lower bounds, given in \cite{s2}. On the other hand, we do not focus on squarefree monomial ideals and consider a general monomial ideal.

To prove the main result, we need a couple of lemmas. The following lemma shows that the lcm number of a monomial ideal does not increase under the colon operation with respect to an arbitrary variable.

\begin{lem} \label{colon}
Let $J\varsubsetneq I$ be two monomial ideals of $S$. Then for every $1\leq i \leq n$, we have $l((I:x_i)/(J:x_i))\leq l(I/J)$.
\end{lem}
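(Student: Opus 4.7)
The plan is to lift a chain realising $l((I:x_i)/(J:x_i))$ back to a chain in $G(I)\cup G(J)$ of the same length, by ``multiplying back'' an $x_i$. The key observation is that the operation ``remove an $x_i$'' commutes with $\mathrm{lcm}$.

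More concretely, define the map $\pi$ on monomials by $\pi(w) = w/\gcd(w, x_i)$, so that $\pi(w) = w/x_i$ if $x_i \mid w$ and $\pi(w) = w$ otherwise. First I would record the standard fact that every minimal generator of $I:x_i$ has the form $\pi(u)$ for some $u \in G(I)$: indeed, if $f \in G(I:x_i)$ then $x_i f \in I$, so some $u \in G(I)$ divides $x_i f$, and a short case distinction on whether $x_i$ divides $u$ shows that $\pi(u) \mid f$ and $\pi(u) \in I:x_i$, forcing $f = \pi(u)$ by minimality. The same statement holds for $J$ in place of $I$.

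Next I would verify that $\pi$ commutes with taking least common multiples, i.e.
\[
\pi\bigl(\mathrm{lcm}(w_1, \ldots, w_k)\bigr) = \mathrm{lcm}\bigl(\pi(w_1), \ldots, \pi(w_k)\bigr)
\]
for any monomials $w_1, \ldots, w_k$. This is clear for variables $x_j$ with $j \neq i$ since $\pi$ fixes their exponents, and for the $x_i$-exponent it reduces to the identity $\max(0, \max_j a_j - 1) = \max_j \max(0, a_j - 1)$ applied to the $x_i$-exponents $a_j$ of the $w_j$.

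With these two facts in hand the argument is direct. Set $t = l((I:x_i)/(J:x_i))$ and pick a witnessing sequence $v_1, \ldots, v_t \in G(I:x_i) \cup G(J:x_i)$ with
\[
v_1 \neq \mathrm{lcm}(v_1, v_2) \neq \cdots \neq \mathrm{lcm}(v_1, \ldots, v_t).
\]
By the first step, for each $j$ we may choose $u_j \in G(I) \cup G(J)$ with $v_j = \pi(u_j)$ (taking $u_j \in G(I)$ when $v_j \in G(I:x_i)$, and $u_j \in G(J)$ otherwise). By the commutation property,
\[
\mathrm{lcm}(v_1, \ldots, v_j) = \pi\bigl(\mathrm{lcm}(u_1, \ldots, u_j)\bigr)
\]
for every $j$. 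Since $\pi$ is a function, an inequality between $\mathrm{lcm}(v_1, \ldots, v_j)$ and $\mathrm{lcm}(v_1, \ldots, v_{j+1})$ forces the corresponding inequality between $\mathrm{lcm}(u_1, \ldots, u_j)$ and $\mathrm{lcm}(u_1, \ldots, u_{j+1})$. Hence $u_1, \ldots, u_t$ is a chain in $G(I) \cup G(J)$ witnessing $l(I/J) \geq t$, which is the desired inequality.

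The only place I foresee any subtlety is in verifying that every minimal generator of $I:x_i$ really does pull back to a minimal generator of $I$ (as opposed to just some element of $I$), but the minimality argument sketched above handles this cleanly. Everything else is a short calculation with exponents.
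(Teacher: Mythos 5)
Your proof is correct and follows essentially the same strategy as the paper: lift the witnessing chain from $G(I:x_i)\cup G(J:x_i)$ back to $G(I)\cup G(J)$ (the preimages you choose under $\pi$ are exactly the elements $v_j$ or $x_iv_j$ that the paper uses) and check that the strict lcm chain survives the lifting. The only cosmetic difference is that your commutation identity $\pi(\mathrm{lcm}(w_1,\dots,w_k))=\mathrm{lcm}(\pi(w_1),\dots,\pi(w_k))$ handles in one stroke the two cases that the paper treats separately, which is a slightly cleaner way to organize the same computation.
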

\begin{proof}
Assume without loss of generality that $i=1$. We note that $$(I:x_1)=\langle\frac{u}{{\rm gcd}(u, x_1)}: u\in G(I)\rangle$$and$$(J:x_1)=\langle\frac{u}{{\rm gcd}(u, x_1)}: u\in G(J)\rangle,$$where ${\rm gcd}(u, x_1)$ denotes the greatest common divisor of $u$ and $x_1$. Set $l((I:x_i)/(J:x_i))=t$ and suppose that $v_1, \ldots, v_t$ are monomials in $G((I:x_1))\cup G((J:x_1))$ such that $$v_1\neq {\rm lcm}(v_1, v_2)\neq \ldots \neq {\rm lcm}(v_1, v_2, \ldots, v_t).$$For every $1\leq j\leq t$, set $u_j=v_j$, if $v_j\in G(I)\cup G(J)$ and $u_j=x_1v_j$ if $v_j\notin G(I)\cup G(J)$. It is clear that in both cases $u_j\in G(I)\cup G(J)$. We claim that for every $1\leq k\leq t-1$, $${\rm lcm}(u_1, u_2, \ldots, u_k)\neq {\rm lcm}(u_1, u_2, \ldots, u_{k+1}).$$Indeed, if $v_j\in G(I)\cup G(J)$, for every $1\leq j\leq k$, then $v_j=u_j$, for every $1\leq j\leq k$ and thus $${\rm lcm}(u_1, u_2, \ldots, u_k)={\rm lcm}(v_1, v_2, \ldots, v_k)\neq {\rm lcm}(v_1, v_2, \ldots, v_{k+1})$$and since ${\rm lcm}(v_1, v_2, \ldots, v_{k+1})$ divides ${\rm lcm}(u_1, u_2, \ldots, u_{k+1})$, it follows that $${\rm lcm}(u_1, u_2, \ldots, u_k)\neq {\rm lcm}(u_1, u_2, \ldots, u_{k+1}).$$Now assume that $v_j\notin G(I)\cup G(J)$, for some $1\leq j\leq k$. Then $u_j=x_1v_j$ and hence $${\rm lcm}(u_1, u_2, \ldots, u_k)=x_1{\rm lcm}(v_1, v_2, \ldots, v_k)\neq$$ $$x_1{\rm lcm}(v_1, v_2, \ldots, v_{k+1})={\rm lcm}(u_1, u_2, \ldots, u_{k+1}).$$This proves the claim and shows that $$u_1\neq {\rm lcm}(u_1, u_2)\neq \ldots \neq {\rm lcm}(u_1, u_2, \ldots, u_t).$$Therefore, $l(I)\geq t$.
\end{proof}

In the following lemma, we consider the behavior of the lcm number of monomial ideals under the elimination of a variable. As usual, for every monomial $u$, the {\it support} of $u$, denoted by ${\rm Supp}(u)$, is the set of variables which divide $u$.

\begin{lem} \label{del}
Let $J\varsubsetneq I$ be two monomial ideals of $S=\mathbb{K}[x_1,\ldots,x_n]$, such that $$x_1\in \bigcup_{u\in G(I)\cup G(J)}{\rm Supp}(u).$$Let $S'=\mathbb{K}[x_2, \ldots, x_n]$ be the polynomial ring obtained from $S$ by deleting the variable $x_1$ and consider the ideals $I'=I\cap S'$ and $J'=J\cap S'$. Then $l(I'/J')+1\leq l(I/J)$.
\end{lem}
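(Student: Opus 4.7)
The plan is to start from a chain of length $s := l(I'/J')$ that witnesses the lcm number of $I'/J'$, and to extend it by one additional element using a generator that involves $x_1$, whose existence is guaranteed by the hypothesis.

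The first step is the routine observation that
\[ G(I') = \{u \in G(I) \with x_1 \notin {\rm Supp}(u)\}, \]
and analogously for $J'$. Indeed, any $v \in G(I')$ lies in $I \cap S'$; any divisor $u \in G(I)$ of $v$ would automatically lie in $S'$ (since $v$ has no $x_1$), hence in $I'$, so by minimality $u = v$ and thus $v \in G(I)$. Conversely an $x_1$-free element of $G(I)$ is clearly minimal in $I'$. In particular every element of $G(I') \cup G(J')$ is an $x_1$-free element of $G(I) \cup G(J)$.

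The second step is the extension. Choose $v_1, \ldots, v_s \in G(I') \cup G(J')$ realizing $l(I'/J') = s$, so that
\[ v_1 \neq {\rm lcm}(v_1, v_2) \neq \cdots \neq {\rm lcm}(v_1, v_2, \ldots, v_s), \]
and choose a generator $w \in G(I) \cup G(J)$ with $x_1 \in {\rm Supp}(w)$. The extended sequence $v_1, \ldots, v_s, w$ lies in $G(I) \cup G(J)$. Its first $s-1$ strict inequalities are inherited verbatim from the original chain, and the additional inequality ${\rm lcm}(v_1, \ldots, v_s) \neq {\rm lcm}(v_1, \ldots, v_s, w)$ holds because the left-hand side has no $x_1$ in its support while the right-hand side does. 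This witnesses $l(I/J) \geq s + 1 = l(I'/J') + 1$, as desired.

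The only conceptual ingredient is the identification of $G(I')$ with the $x_1$-free part of $G(I)$; everything else boils down to the trivial observation that appending an $x_1$-divisible generator to an $x_1$-free chain must strictly enlarge the lcm. I anticipate no real obstacle: the argument is deliberately dual to (and simpler than) the colon lemma, since here we are constructing a longer chain rather than transporting an existing one.
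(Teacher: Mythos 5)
Your proof is correct and follows essentially the same route as the paper: take a maximal chain witnessing $l(I'/J')$, note its members lie in $G(I)\cup G(J)$, and append a generator divisible by $x_1$ to strictly lengthen the lcm chain. The only difference is that you spell out the identification of $G(I')$ with the $x_1$-free part of $G(I)$, which the paper treats as obvious.
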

\begin{proof}
Assume that $l(I'/J')=t$. Suppose that $u_1, \ldots, u_t$ are monomials in $G(I')\cup G(J')$ such that $$u_1\neq {\rm lcm}(u_1, u_2)\neq \ldots \neq {\rm lcm}(u_1, u_2, \ldots, u_t).$$It is obvious that $u_j\in G(I)\cup G(J)$, for every $1\leq j \leq t$. By assumption, there exists a monomial, say $u_{t+1}\in G(I)\cup G(J)$, such that $x_1$ divides $u_{t+1}$. Since $u_1, \ldots, u_t$  do not divide $x_1$, it follows that for every $1\leq j \leq t$, $u_{t+1}\neq u_j$ and $$u_1\neq {\rm lcm}(u_1, u_2)\neq \ldots \neq {\rm lcm}(u_1, u_2, \ldots, u_t)\neq {\rm lcm}(u_1, u_2 \ldots, u_t, u_{t+1}).$$This shows that $l(I/J)\geq t+1$.
\end{proof}

\begin{rem}
It is completely clear from the proof of the Lemma \ref{del}, that one can consider any arbitrary variable instead of $x_1$.
\end{rem}

In the following theorem we determine a lower bound for the Stanley depth of $I/J$. We believe this bound is known to be a lower bound for depth also. But we did not find a reference and hence for the sake of completeness we provide a proof.

\begin{thm} \label{main}
Let $J\varsubsetneq I$ be two monomial ideals of $S=\mathbb{K}[x_1,\ldots,x_n]$. Then ${\rm depth}(I/J)\geq n-l(I/J)+1$ and ${\rm sdepth}(I/J)\geq n-l(I/J)+1$.
\end{thm}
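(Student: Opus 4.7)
The plan is to prove both inequalities simultaneously by induction on the pair $(n,\sum_{i=1}^n d_i)$ in lexicographic order, where $d_i := \max\{\deg_{x_i}(u) : u \in G(I)\cup G(J)\}$. The base case is $l(I/J)=1$: the definition forces $G(I)\cup G(J)$ to consist of a single monomial (any two distinct monomials in $G(I)\cup G(J)$ would, by picking the one that does not divide the other second, produce a sequence witnessing $l\geq 2$), and together with $J\varsubsetneq I$ this gives $J=0$ and $I=(u)$; then $I/J\cong S$ up to $\mathbb{Z}^n$-graded shift and $\depth(I/J)=\sdepth(I/J)=n=n-l(I/J)+1$.

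For the inductive step, first suppose some variable $x_i$ does not divide any element of $G(I)\cup G(J)$. Then $I/J\cong (I\cap\widetilde{S})/(J\cap\widetilde{S})\otimes_{\mathbb{K}}\mathbb{K}[x_i]$, where $\widetilde{S}$ is the polynomial ring in the remaining $n-1$ variables; since both $\depth$ and $\sdepth$ add one under tensoring with a polynomial ring in a new variable and $l$ is unchanged, the inductive hypothesis in $\widetilde{S}$ settles this case.

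Otherwise every variable appears in some generator. Fix $x_1$ and decompose, as $\mathbb{Z}^n$-graded $\mathbb{K}$-vector spaces,
$$I/J \;=\; \bigl(I'/J'\bigr)\,\oplus\, x_1\cdot\bigl((I:x_1)/(J:x_1)\bigr),$$
with $I'=I\cap S'$, $J'=J\cap S'$, and $S'=\mathbb{K}[x_2,\ldots,x_n]$. Lemma~\ref{del} gives $l(I'/J')\le l(I/J)-1$, and Lemma~\ref{colon} gives $l((I:x_1)/(J:x_1))\le l(I/J)$. The inductive hypothesis applies to the first summand in $S'$ (smaller first coordinate) and to the second in $S$ (same $n$, but strictly smaller $\sum d_i$: every generator of the colon ideals has the form $g/\gcd(g,x_1)$, so $d_1$ drops by at least one while the other $d_j$ are unchanged). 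Both invocations yield the target bound $n-l(I/J)+1$.

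To combine the pieces, for the Stanley depth one observes that a Stanley decomposition of $I'/J'$ as an $S'$-module is a fortiori a Stanley decomposition of the first summand in $S$, while each Stanley space $v\mathbb{K}[Y]$ in a Stanley decomposition of $(I:x_1)/(J:x_1)$ over $S$ yields the Stanley space $x_1 v\mathbb{K}[Y]$ inside the second summand; their disjoint union produces a Stanley decomposition of $I/J$ of Stanley depth at least $n-l(I/J)+1$. For the ordinary depth, multiplication by $x_1$ furnishes the short exact sequence of $\mathbb{Z}^n$-graded $S$-modules
$$0\to (I:x_1)/(J:x_1)\xrightarrow{\cdot x_1} I/J\to I'/J'\to 0,$$
whose cokernel, being annihilated by $x_1$, is $I'/J'$ viewed as an $S$-module via $S\twoheadrightarrow S/(x_1)\cong S'$; since $\depth_S(I'/J')=\depth_{S'}(I'/J')$, the depth lemma concludes. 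The main obstacle is ensuring that the secondary induction in Case~2 goes through for the colon summand: this crucially relies on $x_1$ actually appearing in some generator of $G(I)\cup G(J)$, which is exactly the hypothesis of Case~2 and is what makes $\sum d_i$ strictly decrease upon passage to the colon.
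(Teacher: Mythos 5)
Your proof is correct and follows essentially the same route as the paper's: induction on $n$ together with a degree-type secondary parameter, the splitting $I/J=(I'/J')\oplus x_1\bigl((I:x_1)/(J:x_1)\bigr)$ for the Stanley depth, the short exact sequence given by multiplication by $x_1$ for the ordinary depth, and Lemmas \ref{colon} and \ref{del} to control the lcm number of the two pieces. The only cosmetic differences are your choice of $\sum_i d_i$ rather than $\sum_{u}\deg(u)$ as the secondary induction parameter, and your direct identification of the cokernel with $I'/J'$, which the paper writes as $I/(x_1I''+J)$.
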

\begin{proof}
We prove the assertions  by induction on $n$ and $$\sum_{u\in G(I)\cup G(J)} {\rm deg} (u).$$ The assertion can be checked easily, when
$n=1$ or $$\sum_{u\in G(I)\cup G(J)} {\rm deg} (u)=1.$$

We now assume that $n\geq 2$ and
$$\sum_{u\in G(I)\cup G(J)} {\rm deg} (u)\geq 2.$$Let $S'=\mathbb{K}[x_2, \ldots, x_n]$ be the polynomial ring obtained from $S$ by deleting the variable $x_1$ and consider the ideals $I'=I\cap S'$, $J'=J\cap S'$, $I''=(I:x_1)$ and
$J''=(J:x_1)$. If $$x_1\notin \bigcup_{u\in G(I)\cup G(J)}{\rm Supp}(u),$$then trivially ${\rm depth}(I/J)={\rm
depth}_{S'}(I'/J')+1$ and by \cite[Lemma 3.6]{hvz}, we conclude that ${\rm sdepth}(I/J)={\rm
sdepth}_{S'}(I'/J')+1$.
On the other hand it is clear that $l(I/J)=l(I'/J')$.
Therefore, using the induction hypothesis on $n$ we conclude that ${\rm depth}(I/J)\geq n-l(I/J)+1$ and ${\rm sdepth}(I/J)\geq n-l(I/J)+1$.
Therefore, we may assume that $$x_1\in \bigcup_{u\in G(I)\cup G(J)}{\rm Supp}(u).$$

Now $I/J=(I'S'/J'S')\oplus x_1(I''S/J''S)$ and therefore by definition of  the Stanley depth we have
\[
\begin{array}{rl}
{\rm sdepth}(I/J)\geq \min \{{\rm sdepth}_{S'}(I'S'/J'S), {\rm sdepth}_S(I''/J'')\},
\end{array} \tag{1} \label{1}
\]

On the other hand, by applying the depth lemma on the exact sequence
\[
\begin{array}{rl}
0\longrightarrow I''/J''\stackrel{.x_1}{\longrightarrow} I/J\longrightarrow I/(x_1I''+J)
\longrightarrow 0
\end{array}
\]
we conclude that
\[
\begin{array}{rl}
{\rm depth}(I/J)\geq \min \{{\rm depth}_S(I''/J''), {\rm depth}_S(I/(x_1I''+J)).
\end{array} \tag{2} \label{2}
\]

We note that every $I'S'/J'S'$-regular sequence in $S'$ is also a regular sequence for $I/(x_1I''+J)$. This shows that ${\rm depth}_S(I/(x_1I''+J))\geq  {\rm sdepth}_{S'}(I'S'/J'S)$. Hence it follows from inequality (\ref{2}) that
\[
\begin{array}{rl}
{\rm sdepth}(I/J)\geq \min \{{\rm sdepth}_{S'}(I'S'/J'S), {\rm sdepth}_S(I''/J'')\},
\end{array} \tag{3} \label{3}
\]
Using Lemma \ref{colon} we conclude that that $l(I''/J'')\leq l(I/J)$. Hence our induction hypothesis on $$\sum_{u\in G(I)\cup G(J)} {\rm deg}(u)$$
implies that $${\rm sdepth}_S(I''/J'')\geq n-l(I''/J'')+1\geq n-l(I)+1$$and similarly ${\rm depth}_S(I''/J'')\geq n-l(I)+1$.

On the other hand, since $$x_1 \in \bigcup_{u\in G(I)\cup G(J)} {\rm Supp}(u),$$using Lemma \ref{del} we conclude that $l(I'S'/J'S)\leq l(I/J)-1$ and therefore by
the induction hypothesis on $n$ we conclude that

$${\rm sdepth}_{S'}(I'S'/J'S)\geq (n-1)-l(I'S'/J'S)+1\geq (n-1)-(l(I/J)-1)+1$$ $$=n-l(I/J)+1$$ and similarly ${\rm depth}_{S'}(I'S'/J'S)\geq n-l(I/J)+1$.
Now the assertions follow from inequalities (\ref{1}) and (\ref{3}).
\end{proof}

As an immediate consequence of Theorem \ref{main} we obtain the following result.

\begin{cor} \label{lcm}
Let $I$ be a monomial ideal of $S=\mathbb{K}[x_1,\ldots,x_n]$. Then ${\rm sdepth}(I)\geq n-l(I)+1$ and ${\rm sdepth}(S/I)\geq n-l(I)$.
\end{cor}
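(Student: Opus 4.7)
The plan is to deduce both inequalities by direct specialization of Theorem \ref{main}, matching the phrasing ``immediate consequence.'' For the first inequality, I would set $J = 0$: then $G(J) = \emptyset$, so $l(I/0) = l(I)$ directly from the definition, and Theorem \ref{main} yields ${\rm sdepth}(I) \geq n - l(I) + 1$ with no further work.

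For the second inequality, I would apply Theorem \ref{main} to the pair $I \varsubsetneq S$, where $S$ denotes the unit ideal (a monomial ideal with $G(S) = \{1\}$). The theorem then gives ${\rm sdepth}(S/I) \geq n - l(S/I) + 1$, so it remains to establish the identity $l(S/I) = l(I) + 1$. The bound $l(S/I) \geq l(I) + 1$ follows by prepending $1 \in G(S)$ to any sequence $v_1, \ldots, v_s$ witnessing $l(I) = s$: since inserting the unit never changes an lcm, the chain $1, v_1, v_2, \ldots, v_s$ still satisfies the strict-change condition and simply gains one extra jump at the start, $1 \neq v_1$. For the converse, any witnessing sequence $u_1, \ldots, u_t$ for $l(S/I)$ draws from $\{1\} \cup G(I)$, and the unit $1$ can appear at most once and only as $u_1$: any later occurrence $u_j = 1$ with $j > 1$ would force ${\rm lcm}(u_1, \ldots, u_j) = {\rm lcm}(u_1, \ldots, u_{j-1})$, violating distinctness. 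Deleting this possible leading $1$ leaves a valid chain contained in $G(I)$ of length $t$ or $t-1$, so $t \leq l(I) + 1$.

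Substituting $l(S/I) = l(I) + 1$ into Theorem \ref{main} yields ${\rm sdepth}(S/I) \geq n - (l(I)+1) + 1 = n - l(I)$. No genuine obstacle is anticipated: the whole argument reduces to the two elementary identities $l(I/0) = l(I)$ and $l(S/I) = l(I) + 1$, after which the corollary drops out of Theorem \ref{main}.
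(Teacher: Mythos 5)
Your proposal is correct and follows exactly the route the paper intends: the corollary is stated there as an immediate consequence of Theorem \ref{main}, obtained by specializing to $J=0$ for the first bound and to the pair $I\varsubsetneq S$ for the second. Your verification that $l(S/I)=l(I)+1$ (of which only the inequality $l(S/I)\leq l(I)+1$ is actually needed) is sound, since $1\in G(S)$ can only occur as the first term of a witnessing chain.
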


For every vector $\mathbf{a}=(a_1, \ldots, a_n)$ of non-negative integers, we denote the monomial $x_1^{a_1}\ldots x_n^{a_n}$ by $\mathbf{x^{a}}$. Let $I\subseteq S$ be a monomial ideal and $G(I)=\{\mathbf{x^{a_1}}, \ldots, \mathbf{x^{a_m}}\}$ be the set of minimal monomial generators of $I$. The {\it rank} of $I$, denoted by ${\rm rank}(I)$ is the cardinality of the largest $\mathbb{Q}$-linearly independent subset of $\{\mathbf{a_1}, \ldots, \mathbf{a_m}\}$, where $\mathbb{Q}$ is the set of rational numbers. In \cite{s2}, the author proves that for every squarefree monomial ideal of $S$ the inequalities ${\rm sdepth}(I)\geq n-{\rm rank}(I)+1$ and ${\rm sdepth}(S/I)\geq n-{\rm rank}(I)$ hold. We note that Corollary \ref{lcm} implies this result.

\begin{cor} \label{rank}
{\rm (}\cite[Thorem 3.3]{s2}{\rm )} Let $I$ be a squarefree monomial ideal of $S=\mathbb{K}[x_1,\ldots,x_n]$. Then ${\rm sdepth}(I)\geq n-{\rm rank}(I)+1$ and ${\rm sdepth}(S/I)\geq n-{\rm rank}(I)$.
\end{cor}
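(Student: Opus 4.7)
The strategy is to derive both inequalities from Corollary \ref{lcm} by establishing the comparison $l(I)\leq \operatorname{rank}(I)$ for every squarefree monomial ideal $I$. Granting this, substituting into the bounds of Corollary \ref{lcm} yields $\sdepth(I)\geq n-l(I)+1\geq n-\operatorname{rank}(I)+1$ and $\sdepth(S/I)\geq n-l(I)\geq n-\operatorname{rank}(I)$, which is exactly the statement of the corollary.

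To prove $l(I)\leq\operatorname{rank}(I)$, I would choose $u_1,\ldots,u_t\in G(I)$ realising the lcm number (so $t=l(I)$), and denote their exponent vectors by $\mathbf{a}_1,\ldots,\mathbf{a}_t\in\{0,1\}^n$; these belong to the set $\{\mathbf{a}_1,\ldots,\mathbf{a}_m\}$ of exponent vectors of $G(I)$ appearing in the definition of $\operatorname{rank}(I)$. It therefore suffices to show that $\mathbf{a}_1,\ldots,\mathbf{a}_t$ are $\mathbb{Q}$-linearly independent. The decisive ingredient is the squarefree hypothesis: the strict inequality $\operatorname{lcm}(u_1,\ldots,u_{k-1})\neq \operatorname{lcm}(u_1,\ldots,u_k)$ forces a new variable to appear in the support of the updated lcm, so there exists an index $i_k$ with $(\mathbf{a}_k)_{i_k}=1$ and $(\mathbf{a}_j)_{i_k}=0$ for every $j<k$.

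Linear independence then follows by a short induction on $k$: if $\mathbf{a}_1,\ldots,\mathbf{a}_{k-1}$ are already independent and one had a relation $\mathbf{a}_k=\sum_{j<k}c_j\mathbf{a}_j$, then evaluating it at coordinate $i_k$ would give the impossibility $1=0$. Consequently $\operatorname{rank}(I)\geq t=l(I)$, and the corollary follows. There is no real obstacle here; the only subtle point worth flagging is that squarefreeness is essential to convert the chain condition in the definition of $l(I)$ into the literal statement that a \emph{new} coordinate appears at each step, and this reduction would fail in general for non-squarefree monomial ideals, where an lcm can grow by increasing the exponent of a variable already present.
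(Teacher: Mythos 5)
Your proposal is correct and follows essentially the same route as the paper: both reduce the statement to the inequality $l(I)\leq {\rm rank}(I)$ via Corollary \ref{lcm}, and both obtain that inequality by observing that for squarefree monomials the strict chain of lcms forces a strictly increasing chain of supports, whence the exponent vectors $\mathbf{a}_1,\ldots,\mathbf{a}_t$ are $\mathbb{Q}$-linearly independent. Your explicit triangularity argument (a new coordinate $i_k$ with $(\mathbf{a}_k)_{i_k}=1$ and $(\mathbf{a}_j)_{i_k}=0$ for $j<k$) just fills in a detail the paper leaves implicit.
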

\begin{proof}
Assume that $l(I)=t$. Suppose that $u_1, \ldots, u_t$ are monomials in the set of minimal monomial generators of $I$ such that
\[
\begin{array}{rl}
u_1\neq {\rm lcm}(u_1, u_2)\neq \ldots \neq {\rm lcm}(u_1, u_2, \ldots, u_t).
\end{array} \tag{$\ast$} \label{ast}
\]
Since $u_1, \ldots, u_t$ are squarefree, inequalities (\ref{ast}) imply that $${\rm Supp}(u_1)\subsetneqq \bigcup_{i=1}^2{\rm Supp}(u_i)\subsetneqq \ldots \subsetneqq \bigcup_{i=1}^t{\rm Supp}(u_i).$$ This shows that $u_1, \ldots, u_t$ are $\mathbb{Q}$-linearly independent and thus ${\rm rank}(I)\geq t$. Now Corollary \ref{lcm} completes the proof.
\end{proof}

Let $I$ be a monomial ideal and assume that $G(I)$ is the set of minimal monomial generators of $I$. The {\it initial degree} of $I$, denote by ${\rm indeg}(I)$ is the minimum degree of the monomials belonging to $G(I)$. The following proposition provides an upper bound for the lcm number of a squarefree monomial ideal in terms of its initial degree.

\begin{prop} \label{lcmindeg}
Let $I$ be a squarefree monomial ideal of $S=\mathbb{K}[x_1,\ldots,x_n]$. Then $l(I)\leq n-{\rm indeg}(I)+1$.
\end{prop}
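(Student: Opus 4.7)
The plan is to exploit squarefreeness to translate the lcm chain condition into a strictly growing chain of support sets, then pin down the two endpoints of the chain using the hypothesis on the initial degree and the ambient dimension.

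First, I would fix a chain witnessing the lcm number: let $t = l(I)$ and pick squarefree monomials $u_1,\ldots,u_t \in G(I)$ with
\[
u_1 \neq \mathrm{lcm}(u_1,u_2) \neq \cdots \neq \mathrm{lcm}(u_1,\ldots,u_t).
\]
Set $w_k := \mathrm{lcm}(u_1,\ldots,u_k)$. Clearly $w_k$ divides $w_{k+1}$, and since all $u_j$ are squarefree, each $w_k$ is squarefree and $\mathrm{Supp}(w_k) \subseteq \mathrm{Supp}(w_{k+1})$. The condition $w_k \neq w_{k+1}$ for two squarefree monomials with $w_k \mid w_{k+1}$ forces the inclusion of supports to be strict, so
\[
\mathrm{Supp}(w_1) \subsetneq \mathrm{Supp}(w_2) \subsetneq \cdots \subsetneq \mathrm{Supp}(w_t).
\]

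Next I would read off the sizes. Since $w_k$ is squarefree, $|\mathrm{Supp}(w_k)| = \deg(w_k)$, and $|\mathrm{Supp}(w_t)| \leq n$. On the other end, $|\mathrm{Supp}(w_1)| = \deg(u_1) \geq \mathrm{indeg}(I)$, because $u_1 \in G(I)$. The strict chain of $t$ sets with integer cardinalities then yields
\[
\mathrm{indeg}(I) + (t-1) \;\leq\; |\mathrm{Supp}(w_1)| + (t-1) \;\leq\; |\mathrm{Supp}(w_t)| \;\leq\; n,
\]
which rearranges to $t \leq n - \mathrm{indeg}(I) + 1$, as desired.

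There is no real obstacle here: the entire argument is a two-line observation once one sees that in the squarefree setting a strict lcm chain is the same as a strict chain of supports. The only subtlety worth flagging explicitly is that squarefreeness is used twice (to identify degree with support size, and to upgrade $w_k \neq w_{k+1}$ to a strict support inclusion), and that the bound is tight, as shown by $I = (x_1 x_2 \cdots x_d,\, x_{d+1}, x_{d+2}, \ldots, x_n)$, which gives a chain of length exactly $n - d + 1$.
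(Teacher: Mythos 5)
Your proof is correct and follows essentially the same route as the paper: both arguments convert the strict lcm chain into a strictly increasing chain of supports $\mathrm{Supp}(u_1)\subsetneq\bigcup_{i\le 2}\mathrm{Supp}(u_i)\subsetneq\cdots$ and then count, using $|\mathrm{Supp}(u_1)|\geq\mathrm{indeg}(I)$ and the upper bound $n$ on the final support. The tightness example is a nice addition but the core argument is identical.
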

\begin{proof}
Similar to the proof of Corollary \ref{rank}, one can see that if $u_1, \ldots, u_t$ are monomials in the set of minimal monomial generators of $I$ such that $$u_1\neq {\rm lcm}(u_1, u_2)\neq \ldots \neq {\rm lcm}(u_1, u_2, \ldots, u_t),$$then
\[
\begin{array}{rl}
{\rm Supp}(u_1)\subsetneqq \bigcup_{i=1}^2{\rm Supp}(u_i)\subsetneqq \ldots \subsetneqq \bigcup_{i=1}^t{\rm Supp}(u_i).
\end{array} \tag{$\ast\ast$} \label{astast}
\]
Since $u_1, \ldots, u_t$ are squarefree monomials, the cardinality of ${\rm Supp}(u_1)$ is greater than or equal to ${\rm indeg}(I)$. On the other hand, the cardinality of $\bigcup_{i=1}^t{\rm Supp}(u_i)$ is at most $n$. Hence, the inclusions (\ref{astast}) show that
$$l(I)\leq \mid\bigcup_{i=1}^t{\rm Supp}(u_i)\mid-\mid{\rm Supp}(u_1)\mid+1\leq n-{\rm indeg}(I)+1.$$
\end{proof}

The following corollary is an immediate consequence of Corollary \ref{lcm} and Corollary \ref{lcmindeg}.

\begin{cor} \label{indeg}
Let $I$ be a squarefree monomial ideal of $S=\mathbb{K}[x_1,\ldots,x_n]$. Then ${\rm sdepth}(I)\geq {\rm indeg}(I)$ and ${\rm sdepth}(S/I)\geq {\rm indeg}(I)-1$.
\end{cor}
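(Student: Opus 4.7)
The plan is to combine the two previously established results in a single line of arithmetic. Proposition \ref{lcmindeg} yields the inequality $l(I)\leq n-{\rm indeg}(I)+1$, which I rewrite as
\[
n-l(I)\geq {\rm indeg}(I)-1.
\]
Corollary \ref{lcm} gives ${\rm sdepth}(I)\geq n-l(I)+1$ and ${\rm sdepth}(S/I)\geq n-l(I)$, so substituting the rewritten form into each of these bounds produces ${\rm sdepth}(I)\geq {\rm indeg}(I)$ and ${\rm sdepth}(S/I)\geq {\rm indeg}(I)-1$, which is exactly the desired conclusion.

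Since the corollary only chains two bounds that have already been proved in full generality, there is essentially no obstacle to overcome: I do not need to reopen any induction on $n$ or on the total degree of the generators, nor do I need to revisit the hypothesis that $I$ is squarefree (that hypothesis is already absorbed into Proposition \ref{lcmindeg}, which is the only place where squarefreeness is used). The single step of care is simply to verify the algebraic manipulation above and to record that both consequences of Corollary \ref{lcm} are being invoked in parallel. No auxiliary lemmas, exact sequences, or inductive constructions are required.
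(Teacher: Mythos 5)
Your proposal is correct and matches the paper exactly: the paper also derives this corollary as an immediate consequence of Corollary \ref{lcm} combined with Proposition \ref{lcmindeg}, via the same arithmetic $n-l(I)\geq {\rm indeg}(I)-1$. No further comment is needed.
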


\begin{rem}
Let $I$ be a squarefree monomial ideal. We mention that the inequality ${\rm sdepth}(I)\geq {\rm indeg}(I)$ was known by \cite[Proposition 3.1]{hvz}.
\end{rem}

Let $I$ be a monomial ideal of $S=\mathbb{K}[x_1,\ldots,x_n]$ and $G(I)$ be the set of minimal monomial generators of $I$. Assume that $\mid G(I)\mid=m$. Cimpoea{\c{s}} \cite{c1} proves that ${\rm sdepth}(S/I)\geq n-m$ (see \cite[Proposition 1.2]{c1}). It is completely clear that the bound given in Corollary \ref{lcm} for the Stanley depth of $S/I$ is better than the bound given by Cimpoea{\c{s}}. Indeed, there are examples (see Example \ref{examp}), for which $m-l(I)$ is large enough and the inequality ${\rm sdepth}(S/I)\geq n-l(I)$ is sharp for them, i.e., ${\rm sdepth}(S/I)= n-l(I)$.

\begin{exmp} \label{examp}
Let$$I=(x_ix_j : 1\leq i< j\leq n)$$ be a monomial ideal of $S=\mathbb{K}[x_1, \ldots, x_n]$. Then $l(I)=n-1$ and thus$$m-l(I)=\frac{(n-1)(n-2)}{2},$$ where $m$ is the cardinality of the set of minimal monomial generators of $I$. This shows that by choosing a suitable $n$, the number $m-l(I)$ can be larger than any given integer. On the other hand, the height of every associated prime of $I$ is equal to $n-1$. Thus, it follows from \cite[Proposition 1.3]{hvz} and Corollary \ref{lcm} that ${\rm sdepth}(S/I)=1=n-l(I)$.
\end{exmp}

%%%%%%%%%%%%%%%%%%%%%%%%%%%%%%%%%%%%%%%%%%%%%%%%%%%%%%%%%%%%%%%%%%%%%%%%%%

\section{Stanley depth and order dimension}\label{secDim}
In this section, we give the proof of our second main result.
Let us recall some definitions of lattice theory. For a comprehensive treatment of this subject we refer the reader to \cite{g}.
Recall that a join-semilattice is a poset in which every two elements have a least upper bound, called their join.
We call a subset $L'$ of a finite join-semilattice $L$ a \emph{join-subsemilattice} if it is a join-semilattice with the induced join-operation from $L$.
It is well-known that every finite join-semilattice with a minimal element is in fact a lattice. However, as we will never consider the meet, it is more convenient to work in the category of join-semilattices.
An element $m \in L$ is called \emph{join-irreducible} if it cannot be written as the join of two elements different from $m$.
Note that every element $m$ in a finite join-semilattice is the join of the set of all join-irreducible elements less than or equal to $m$.

The following is a convenient characterization of the dimension of a finite join-semilattice.
\newcommand{\pinv}[1]{{#1}^{\dagger}}
\begin{lem}
	Let $L$ be a finite join-semilattice and let $d \in \NN$. Then the following are equivalent:
	\begin{enumerate}
	\item There exists a surjective join-preserving map $\phi: L' \rightarrow L$ for a finite join-subsemilattice $L'$ of $\NN^d$.
	\item $\dim L \leq d$.
	\end{enumerate}
\end{lem}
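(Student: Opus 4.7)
The plan is to give explicit constructions in both directions, each centered on one natural formula.

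For the implication (1)$\Rightarrow$(2), given a surjective join-preserving map $\phi\colon L'\to L$ with $L'$ a finite join-subsemilattice of $\NN^d$, I would define the candidate embedding $j\colon L\to\NN^d$ by
\[
j(p) := \bigvee\{x\in L' \with \phi(x)\leq p\},
\]
where the join is taken in $L'$. The set on the right is non-empty by surjectivity and closed under the join of $L'$, because $\phi(x\vee y)=\phi(x)\vee\phi(y)\leq p$; hence the join lies in $L'\subseteq\NN^d$. The key identity is $\phi(j(p))=p$: since $\phi$ is monotone (a formal consequence of being join-preserving), $\phi(j(p))$ is the largest value of $\phi$ on the indicated set, which equals $p$ by surjectivity. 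Given this identity, monotonicity of $j$ is immediate from the definition, and the reverse implication $j(p)\leq j(q)\Rightarrow p\leq q$ follows by applying $\phi$ to both sides.

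For (2)$\Rightarrow$(1), given an embedding $j\colon L\to\NN^d$, I would let $L'$ be the join-subsemilattice of $\NN^d$ generated by $j(L)$, and define $\phi\colon L'\to L$ by
\[
\phi(y) := \bigvee\{p\in L \with j(p)\leq y\},
\]
the join now being computed in $L$. Every $y\in L'$ admits a presentation $y=\bigvee_i j(p_i)$, so the set is non-empty; and $\phi(j(p))=p$ holds because $j$ is order-reflecting, which also gives surjectivity. The substantive step is join-preservation, which I would deduce from the cleaner presentation-identity $\phi(\bigvee_i j(p_i))=\bigvee_i p_i$. Here the inequality $\geq$ is clear, while $\leq$ uses monotonicity of $j$ to obtain $\bigvee_i j(p_i)\leq j(\bigvee_i p_i)$: any $p$ with $j(p)\leq\bigvee_i j(p_i)$ then satisfies $j(p)\leq j(\bigvee_i p_i)$, whence $p\leq\bigvee_i p_i$ since $j$ is order-reflecting. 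Join-preservation of $\phi$ falls out by concatenating presentations for $y_1$ and $y_2$.

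The main obstacle is precisely this second direction. The embedding $j$ is not required to respect joins (the lemma explicitly warns of this), so $L'$ can be strictly larger than $j(L)$, and one must verify that the interpolated values of $\phi$ are internally consistent and still join-preserving. The presentation-identity $\phi(\bigvee_i j(p_i))=\bigvee_i p_i$ is what makes this coherent, and the order-reflecting property of $j$ is the only ingredient that distinguishes an embedding from a mere monotone map in the argument.
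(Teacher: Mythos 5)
Your proposal is correct and follows essentially the same route as the paper: in one direction you use the embedding $j(p)=\bigvee\{x\in L'\with \phi(x)\le p\}$, which coincides with the paper's $\bigvee\phi^{-1}(p)$, and in the other you use the very same map $\phi(y)=\bigvee\{p\in L\with j(p)\le y\}$ on the join-subsemilattice generated by $j(L)$. The only cosmetic difference is that you verify join-preservation of $\phi$ via the presentation identity $\phi(\bigl(\bigvee_i j(p_i)\bigr))=\bigvee_i p_i$, whereas the paper argues through the inequality $j(\phi(x'))\ge x'$ using join-irreducible elements; both verifications are valid.
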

\begin{proof}
1) $\Rightarrow$ 2): Consider the map $\pinv{\phi}: L \rightarrow \NN^d$ defined by $\pinv{\phi}(a) := \bigvee \phi^{-1}(a)$. It is an embedding of $L$ into $\NN^d$ by \cite[Lemma 4.1]{ikm2}, hence $\dim L \leq d$.

2) $\Rightarrow$ 1): Let $j: L \rightarrow \NN^d$ be an embedding and set $L' \subset \NN^d$ to be the join-subsemilattice of $\NN^d$ generated by the image of $j$, i.e. the set of all joins of subsets of the image of $L$. % Then $j: L \rightarrow L'$ clearly satisfies the hypothesis of the preceding lemma and the claim follows.
Define $\phi: L' \rightarrow L$ by $\phi(x') := \bigvee\set{x \in L \with j(x) \leq x'}$.
This map is clearly monotonic.
Moreover, monotonicity implies that $j(x \vee y) \geq j(x) \vee j(y)$ and $\phi(x' \vee y') \geq \phi(x') \vee \phi(y')$.
So it remains to show that $\phi$ is surjective and that $\phi(x' \vee y') \leq \phi(x') \vee \phi(y')$ for $x', y' \in L'$.

For the first claim, we show that $\phi\circ j = Id_L$. Indeed,
\[ \phi(j(x)) = \bigvee\set{y \in L \with j(y) \leq j(x)} = \bigvee\set{y \in L \with y \leq x} = x \]
for every $x \in L$.
Moreover, we claim that for every $x' \in L'$ we have
\[
\begin{array}{rl}
j(\phi(x')) \geq x'.
\end{array} \tag{$\dag$} \label{dag}
\]
To see this we compute
\begin{align*}
j(\phi(x')) &= j(\bigvee\set{y \in L \with j(y) \leq x'}) \geq \bigvee\set{j(y) \with y \in L, j(y) \leq x'} \\
%&\geq \bigvee\set{j(y) \with y \in L \text{ join-irreducible}, j(y) \leq x'} \\
&\geq \bigvee\set{y' \in L' \with y' \text{ join-irreducible}, y' \leq x'} = x'
\end{align*}
Here we used that $j$ is surjective onto the join-irreducible elements of $L'$.
Now \ref{dag} implies that $$x' \vee y' \leq x' \vee j(\phi(y')) \leq j(\phi(x')) \vee j(\phi(y'))$$and thus
\[ \phi(x' \vee y') \leq \phi(j(\phi(x')) \vee j(\phi(y'))) \leq \phi(j(\phi(x') \vee \phi(y'))) = \phi(x') \vee \phi(y'). \]
\end{proof}

\begin{thm}\label{thm:dim}
	Let $J \subsetneq I \subset S$ be two monomial ideals. Then
	\[ \sdepth_S (I/J) \geq n - \dim L_{I/J} \]
	and
	 \[\sdepth_S (I) \geq n - \dim L_I + 1. \]
\end{thm}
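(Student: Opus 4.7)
The plan is as follows. Set $d := \dim L_{I/J}$. By the previous lemma, there is a surjective join-preserving map $\phi: L' \to L_{I/J}$ from a finite join-subsemilattice $L' \subset \NN^d$. Viewing the elements of $L'$ as exponent vectors of monomials in $T := \KK[y_1, \ldots, y_d]$, the idea is to realize $\phi$ as coming from a pair of monomial ideals in $T$ and then transfer Stanley depth across $\phi$, picking up the $n - d$ excess variables of $S$ along the way.

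For each $u \in G(I) \cup G(J)$, choose a preimage $\tilde u \in L'$ under $\phi$, and define $\tilde{I} := (\tilde u : u \in G(I))$ and $\tilde{J} := (\tilde u : u \in G(J))$ in $T$. After replacing $L'$ by the join-subsemilattice generated by $\{\tilde u : u \in G(I) \cup G(J)\} \cup \{\hat 0\}$, which is still mapped onto $L_{I/J}$ since $\phi$ preserves joins and every element of $L_{I/J}$ is a join of generators, one may assume $L' = L_{\tilde I / \tilde J}$ and that $\phi : L_{\tilde I / \tilde J} \to L_{I/J}$ is a surjective join-preserving map of lcm lattices. The condition $J \subsetneq I$ forces $\tilde J \subsetneq \tilde I$, so $\tilde I / \tilde J$ is nonzero.

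The heart of the proof is the transfer inequality
\[ \sdepth_S(I/J) \;\geq\; \sdepth_T(\tilde{I}/\tilde{J}) + (n - d), \]
expressing a monotonicity of Stanley depth under surjective join-preserving maps of lcm lattices, in the spirit of \cite{ikm2}. The intended construction takes a Stanley decomposition $\mathcal D$ of $\tilde I / \tilde J$ and, for each Stanley space $v\,\KK[Z] \subset \tilde I / \tilde J$, produces a Stanley space of $I/J$ whose base monomial is $\phi(v)$ and whose free variables comprise the variables corresponding to $Z$ together with the $n - d$ variables of $S$ not used by the specialization; join-preservation of $\phi$ is precisely what forces the resulting family to remain a direct-sum decomposition of $I/J$. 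I expect verifying this in full generality, with careful bookkeeping in the multigraded structure, to be the main technical obstacle.

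Granted the transfer inequality, both statements follow from trivial lower bounds on the Stanley depth of a nonzero monomial module over $T$. The automatic bound $\sdepth_T(\tilde I / \tilde J) \geq 0$ immediately yields $\sdepth_S(I/J) \geq n - d$, proving the first inequality. For the second, one uses the well-known improvement $\sdepth_T(\tilde I) \geq 1$, valid for any nonzero monomial ideal (any generator $\tilde u \in G(\tilde I)$ exhibits a Stanley space $\tilde u\,\KK[y_1,\ldots,y_d]$ of positive dimension, and a standard argument extends this to a full Stanley decomposition of $\tilde I$ with all summands of dimension $\geq 1$), to conclude $\sdepth_S(I) \geq (n - d) + 1 = n - \dim L_I + 1$.
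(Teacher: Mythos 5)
Your proposal follows essentially the same route as the paper: reduce to $d=\dim L_{I/J}$ variables via the preceding lemma, realize the join-subsemilattice $L'\subset\NN^d$ as the lcm lattice of a pair of monomial ideals in $d$ variables, and transfer Stanley depth along the surjective join-preserving map $\phi$. The ``transfer inequality'' you single out as the main technical obstacle is precisely \cite[Theorem 4.9]{ikm2} (monotonicity of the defect $n-\sdepth$ under surjective join-preserving maps of lcm lattices), which the paper simply cites rather than proves, so granting that result your argument coincides with the paper's.
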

\begin{proof}
	Let $d := \dim L_{I/J}$. Consider the join-semilattice $L' \subset \NN^d$ of the preceding lemma and the corresponding surjective join-preserving map $\phi: L' \rightarrow L_{I/J}$.
	Let moreover $L'' \subset L'$ be the join-subsemilattice corresponding to $L_J$, i.e. the join-{subsemilattice} generated by the images of $L_J$ in $\NN^d$.
	By construction $\phi$ maps $L''$ onto $L_J$.
	We interpret the elements of $L'$ as exponent vectors to see that $L'$ and $L''$ are lcm lattices of $I'/J'$ and $J'$, for two monomial ideals $J' \subsetneq I' \subset S' = \KK[x_1,\dotsc, x_d]$ in $d$ variables.
	Now it follows from \cite[Theorem 4.9]{ikm2} that $n - \sdepth_S (I/J) \leq d - \sdepth_{S'} (I'/J') \leq d$ and thus
	\[ \sdepth_S (I/J) \geq n - d. \]
	
	Moreover, by the same argument $n - \sdepth_S(I) \leq d - \sdepth_{S'} (I') \leq d - 1$ and hence \[ \sdepth_S (I) \geq n - d + 1. \]
\end{proof}

\begin{rem}
	It also holds that
	\[ \depth_S (I/J) \geq n - \dim L_{I/J} \]
	and
	\[\depth_S (I) \geq n - \dim L_I + 1. \]
	This is proven by the same argument, using \cite[Theorem 4.11]{ikm2} instead of \cite[Theorem 4.9]{ikm2}.
\end{rem}

We present two examples to show that in general there is no inequality between the lcm number of $I$ and the order dimension of $L_I$.
\begin{exmps}
\begin{enumerate}
\item Consider the ideal $I = (x^2, xy, y^2) \subset S = \KK[x,y,z]$.
	It is easy to see that $l(I) = 3$, so Theorem \ref{lcm} gives the bound $\sdepth_S S/I \geq 3 - 3 = 0$. On the other hand, $\dim L_I = 2$ (the exponent vectors give an embedding into $\NN^2$), so Theorem \ref{thm:dim} gives the better bound $\sdepth_S S/I \geq 3 - 2 = 1$.
\item Let $I \subset S = \KK[x_1, \dotsc, x_5]$ be the ideal generated by all squarefree monomials of degree $3$.
	Again, we have that $l(I) = 3$, so Theorem \ref{lcm} gives the bound $\sdepth_S S/I \geq 5 - 3 = 2$. We computationally verified that $\dim L_I = 4$, so in this case Theorem \ref{thm:dim} gives the worse bound $\sdepth_S S/I \geq 5 - 4 = 1$.
\end{enumerate}
\end{exmps}

%%%%%%%%%%%%%%%%%%%%%%%%%%%%%%%%%%%%%%%%%%%%%%%%%%%%%%%%%%%%%%%%%%%%%%%%%%

\section{Monomial ideals with small lcm number and order dimension} \label{sec3}

In this section, we prove that the Stanley--Reisner ideal of a vertex decomposable simplicial complex satisfies Stanley's Conjecture (see Theorem \ref{decom}). Using this result, Corollary \ref{lcm} and Theorem \ref{thm:dim}, we prove that $I$ and $S/I$ satisfy Stanley's conjecture if
\begin{itemize}
\item[(i)]  $l(I)\leq 3$ or
\item[(ii)]  $\dim L_I \leq 3$ or
\item[(iii)] $l(I)\leq 4$ and $S/I$ is Gorenstein or
\item[(iv)] $\dim L_I \leq 4$ and $S/I$ is Gorenstein.
\end{itemize}

To state and prove the next results, we need to introduce some notation and well-known facts from combinatorial commutative algebra.

A {\it simplicial complex} $\Delta$ on the set of vertices $[n]:=\{1,
\ldots,n\}$ is a collection of subsets of $[n]$ which is closed under
taking subsets; that is, if $F \in \Delta$ and $F'\subseteq F$, then also
$F'\in\Delta$. Every element $F\in\Delta$ is called a {\it face} of
$\Delta$, the {\it size} of a face $F$ is defined to be $|F|$ and its {\it
dimension} is defined to be $|F|-1$. (As usual, for a given finite set $X$,
the number of elements of $X$ is denoted by $|X|$.) The {\it dimension} of
$\Delta$ which is denoted by $\dim\Delta$, is defined to be $d-1$, where $d
=\max\{|F|\mid F\in\Delta\}$. A {\it facet} of $\Delta$ is a maximal face
of $\Delta$ with respect to inclusion. Let $\mathcal{F}(\Delta)$ denote the
set of facets of $\Delta$. It is clear that $\mathcal{F}(\Delta)$
determines $\Delta$. When $\mathcal{F}(\Delta)= \{F_1,\ldots,F_m\}$, we
write $\Delta=\langle F_1, \ldots,F_m\rangle$. We say that $\Delta$ is {\it
pure} if all facets of $\Delta$ have the same cardinality. The {\it link}
of $\Delta$ with respect to a face $F \in \Delta$, denoted by ${\rm
lk_{\Delta}}(F)$, is the simplicial complex ${\rm lk_{\Delta}}(F)=\{G
\subseteq [n]\setminus F\mid G\cup F\in \Delta\}$ and the {\it deletion} of
$F$, denoted by ${\rm del_{\Delta}}(F)$, is the simplicial complex ${\rm
del_{\Delta}}(F)=\{G \subseteq [n]\setminus F\mid G \in \Delta\}$. When $F
= \{x\}$ is a single vertex, we abuse notation and write ${\rm lk_{\Delta
}}(x)$ and ${\rm del_{\Delta}}(x)$.

Let $S=\mathbb{K}[x_1,\ldots, x_n]$ and let $\Delta$ be a simplicial complex on $[n]$. For every
subset $F\subseteq [n]$, we set ${\it x}_F=\prod_{i\in F}x_i$. The {\it
Stanley--Reisner ideal of $\Delta$ over $\mathbb{K}$} is the ideal $I_{
\Delta}$ of $S$ which is generated by those squarefree monomials $x_F$ with
$F\notin\Delta$. In other words, $I_{\Delta}=\langle{\it x}_F\mid
F\in\mathcal{N}(\Delta) \rangle$, where $\mathcal{N}(\Delta)$ denotes the
set of minimal nonfaces of $\Delta$ with respect to inclusion. The {\it
Stanley--Reisner ring of $\Delta$ over $\mathbb{K}$}, denoted by $\mathbb
{K}[\Delta]$, is defined to be $\mathbb{K}[\Delta]=S/I_{\Delta}$. Let
$I\subseteq S$ be an arbitrary squarefree monomial ideal. Then there is a
unique simplicial complex $\Delta$ such that $I = I_{\Delta}$. A simplicial complex $\Delta$ is said to be {\it Cohen--Macaulay} if $\mathbb{K}[\Delta]$ is Cohen--Macaulay. For every integer $0\leq i\leq {\rm dim}\Delta$
the simplicial complex $\Delta^{[i]}:=\langle
F\in\Delta\mid\dim F=i\rangle$ is called the $i$-{\it pure skeleton} of
$\Delta$. A simplicial complex $\Delta$ is said to be {\it sequentially Cohen--Macaulay} if $\Delta^{[i]}$ is Cohen--Macaulay, for every $0\leq i\leq {\rm dim}\Delta$.

Let $\Delta$ be a simplicial complex on the vertex set $[n]$. Then we say that $\Delta$ is {\it vertex decomposable} if either
\begin{itemize}
\item[(1)] $\Delta$ is a simplex, i.e., a simplicial complex with only one facet or\\[-0.3cm]

\item[(2)] there exists $k\in [n]$ such that ${\rm del_{\Delta}}(k)$ and
${\rm lk_{\Delta}}(k)$ are vertex decomposable and every facet of
${\rm del_{\Delta}}(k)$ is a facet of $\Delta$.
\end{itemize}

It is know that every vertex decomposable simplicial complex is sequentially Cohen--Macaulay (see for example \cite{w}).

\begin{rem} \label{depth}
It follows from \cite[Corollary 3.33]{mv} that for every vertex decomposable simplical complex $\Delta$, we have
$${\rm depth}(\mathbb{K}[\Delta])={\rm min}\big\{\mid F\mid : F {\rm \ is \ a \ facet \ of \ } \Delta\big\}.$$
\end{rem}

In the next theorem, we prove that the Stanley--Reisner ideal of vertex decomposable simplicial complexes satisfy Stanley's conjecture.

\begin{thm} \label{decom}
Let $\Delta$ be a vertex decomposable simplicial complex. Then $I_{\Delta}$ satisfies Stanley's conjecture.
\end{thm}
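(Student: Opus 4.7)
My plan is induction on the number of vertices of $\Delta$. The base case is a simplex $\Delta=\langle F\rangle$, where $I_\Delta=(x_i\with i\notin F)$ is generated by a subset of the variables; for such ideals Stanley's conjecture is well known (the ideal is a complete intersection on variables, whose Stanley depth is easy to bound).

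For the inductive step, let $k\in[n]$ be a shedding vertex and set $S'=\KK[x_i\with i\neq k]$. The first step is to split $I_\Delta$ according to divisibility by $x_k$:
\[ I_\Delta = N_1 \oplus N_2, \quad N_1=\set{u\in I_\Delta\with x_k\nmid u}, \quad N_2=\set{u\in I_\Delta\with x_k\mid u}. \]
A direct check of the support conditions identifies $N_1$ with $I_{{\rm del}_\Delta(k)}$ as an $S'$-submodule of $S$, and $N_2$ with $x_k\cdot I_{{\rm lk}_\Delta(k)}S\cong I_{{\rm lk}_\Delta(k)}\otimes_\KK\KK[x_k]$ as an $S$-submodule. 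Since ${\rm del}_\Delta(k)$ and ${\rm lk}_\Delta(k)$ are vertex decomposable on fewer vertices, the induction hypothesis supplies Stanley decompositions of $I_{{\rm del}_\Delta(k)}$ and $I_{{\rm lk}_\Delta(k)}$ in $S'$ whose sdepth is at least the corresponding depth.

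These assemble into a Stanley decomposition of $I_\Delta$ in $S$: each Stanley space $u\KK[Z]$ in the decomposition of $I_{{\rm del}_\Delta(k)}$ is reused verbatim, while each Stanley space $v\KK[W]$ in the decomposition of $I_{{\rm lk}_\Delta(k)}$ is enlarged to $(x_k v)\KK[W\cup\set{x_k}]$, raising its dimension by one. Hence
\[ \sdepth_S(I_\Delta) \geq \min\bigl(\sdepth_{S'}(I_{{\rm del}_\Delta(k)}),\ \sdepth_{S'}(I_{{\rm lk}_\Delta(k)})+1\bigr). \]

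The final step is to match this bound with $\depth(I_\Delta)$. Applying Remark \ref{depth} to each of the three vertex decomposable complexes, together with the depth lemma for $0\to I_\bullet\to S\to\KK[\Delta_\bullet]\to0$, shows that the depth of each Stanley--Reisner ideal equals one more than the minimum facet size of its complex. The shedding condition makes the facets of $\Delta$ partition cleanly as the facets of ${\rm del}_\Delta(k)$ together with $F\cup\set{k}$ for each facet $F$ of ${\rm lk}_\Delta(k)$, and this identification yields
\[ \depth(I_\Delta) = \min\bigl(\depth_{S'}(I_{{\rm del}_\Delta(k)}),\ \depth_{S'}(I_{{\rm lk}_\Delta(k)})+1\bigr), \]
which together with the inductive sdepth bound closes the induction. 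I expect the main obstacle to be the careful verification of the facet bijection, specifically how the shedding condition is exactly what ensures that facets of ${\rm del}_\Delta(k)$ remain facets of $\Delta$, together with the accounting for degenerate cases in which $I_{{\rm del}_\Delta(k)}$ or $I_{{\rm lk}_\Delta(k)}$ is the zero ideal.
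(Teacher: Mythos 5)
Your proof is correct and follows essentially the same route as the paper: the same splitting $I_\Delta = I_{\mathrm{del}_\Delta(k)}S' \oplus x_k\, I_{\mathrm{lk}_\Delta(k)}S$, the same resulting sdepth inequality, and the same use of Remark \ref{depth} to compare depths via minimum facet sizes of the deletion and the link. The only differences are cosmetic: you record the depth comparison as an equality coming from the facet partition (the paper only needs the two one-sided inequalities), and your base case also covers simplices with ghost vertices, where $I_\Delta$ is generated by a set of variables rather than being zero.
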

\begin{proof}
We prove the assertion by induction on $n$. If $\Delta$ is a simplex, then $I_{\Delta}=0$ and there is nothing to prove. Thus, assume that $\Delta$ is not a simplex. Therefore, there exists a vertex $k\in [n]$ such that ${\rm del_{\Delta}}(k)$ and
${\rm lk_{\Delta}}(k)$ are vertex decomposable and every facet of
${\rm del_{\Delta}}(k)$ is a facet of $\Delta$. Let $S'=\mathbb{K}[x_1, \ldots, x_{k-1}, x_{k+1}, \ldots, x_n]$ be the polynomial ring obtained from $S$ by deleting the variable $x_k$ and consider the ideals $I'=I\cap S'$ and
$I''=(I:x_k)$. Now $I=I'S'\oplus x_kI''S$ and therefore by definition of  the Stanley depth we have
\[
\begin{array}{rl}
{\rm sdepth}(I)\geq \min \{{\rm sdepth}_{S'}(I'S'), {\rm sdepth}_S(I'')\}.
\end{array} \tag{$\ddagger$} \label{ddag}
\]

Note that $I''=(I:x_k)$ is the Stanley--Reisner ideal of ${\rm lk_{\Delta}}(k)$, considered as a simplicial complex on $[n]\setminus\{k\}$. Since ${\rm lk_{\Delta}}(k)$ is a vertex decomposable simplicial complex, it follows from \cite[Lemma 3.6]{hvz}, Remark \ref{depth} and the induction hypothesis that
$${\rm sdepth}_S(I'')={\rm sdepth}_{S'}(I'')+1\geq {\rm depth}_{S'}(I'')+1$$
$$\geq({\rm depth}_S(I)-1)+1={\rm depth}_S(I).$$

On the other hand, $I'=I\cap S'$ is the Stanley--Reisner ideal of ${\rm del_{\Delta}}(k)$, considered as a simplicial complex on $[n]\setminus\{k\}$. Since ${\rm del_{\Delta}}(k)$ is a vertex decomposable simplicial complex, it follows from Remark \ref{depth} and the induction hypothesis that
$${\rm sdepth}_{S'}(I'S')\geq {\rm depth}_{S'}(I'S')\geq {\rm depth}_S(I).$$

Now inequality (\ref{ddag}) completes the proof.
\end{proof}

Let $I$ be a monomial ideal. In \cite{hsy}, the authors prove that $S/I$ satisfies Stanley's conjecture, provided that ${\rm depth}(S/I)\geq n-1$ (see \cite[Corollary 2.3]{hsy}). The following lemma is an extension of this result.

\begin{lem} \label{codim}
Let $I$ be a monomial ideal of $S=\mathbb{K}[x_1,\ldots,x_n]$ and assume that ${\rm depth}(S/I)\geq n-2$. Then $I$ and $S/I$ satisfy Stanley's conjecture.
\end{lem}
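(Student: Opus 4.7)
The plan is to extend \cite[Corollary 2.3]{hsy}, which establishes Stanley's conjecture when $\depth(S/I) \geq n-1$, to the case $\depth(S/I) = n-2$ (the only new case, since by Auslander--Buchsbaum this forces $\depth(I) = n-1$ and $\pdim(S/I) = 2$). The argument will proceed by induction on $n$, handling the base case $n \leq 2$ directly since Stanley's conjecture is known for monomial ideals in at most two variables.

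For the inductive step with $n \geq 3$, pick a variable $x_k$ dividing some minimal generator of $I$ (if no such $x_k$ exists, then $I = 0$ and the claim is trivial). Set $S' = \KK[x_i : i \neq k]$, $I' = I \cap S'$, and $I'' = (I : x_k)$. The $\KK$-vector-space decompositions
$$I = I' \oplus x_k I'', \qquad S/I = S'/I' \oplus x_k (S/I''),$$
combined with Stanley decompositions of the summands (exactly as in the proof of Theorem \ref{main}), yield
$$\sdepth_S(I) \geq \min\bigl(\sdepth_{S'}(I'),\, \sdepth_S(I'')\bigr)$$
and the analogous bound for $S/I$.

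Applying the depth lemma to $0 \to S/I''[-x_k] \to S/I \to S'/I' \to 0$ gives $\depth_S(S/I'') \geq n-2$ and $\depth_{S'}(S'/I') \geq n-3 = (n-1) - 2$. This activates the inductive hypothesis on two fronts: on $n$ for $I'$ (which lives in $S'$ with $n-1$ variables), and on a secondary induction parameter $\sum_{u \in G(I)} \deg(u)$ (which strictly decreases under the colon by $x_k$) for $I''$ in $S$. These applications secure Stanley's conjecture for $I'$, $S'/I'$, $I''$, and $S/I''$, which then plug into the bounds above to yield Stanley's conjecture for $S/I$.

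The main obstacle is the potential gap in the $\sdepth_S(I)$ bound: we need $\sdepth_S(I) \geq \depth_S(I) = n-1$, but $\sdepth_{S'}(I') \leq n-1$ a priori, and when $\depth_{S'}(S'/I') = n-3$ the inductive hypothesis only produces $\sdepth_{S'}(I') \geq n-2$, which is one short. Closing this gap will require either a refined Stanley decomposition that fuses matching pieces of $I'$ with those of $x_k I''$ to recover the missing $+1$ (in the spirit of \cite[Lemma 3.6]{hvz}), or a separate direct argument exploiting the restrictive structure of monomial ideals with $\pdim(S/I) \leq 2$.
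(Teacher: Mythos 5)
Your proposal does not close, and you say so yourself in the final paragraph: the deletion branch of the splitting $I = I'S' \oplus x_k I''S$ is intrinsically off by one, and this is fatal to the induction, not a technicality. Concretely, you need $\sdepth_S(I) \geq \depth_S(I) = \depth_S(S/I)+1$, but the Stanley spaces coming from a decomposition of $I'$ over $S'$ live in $n-1$ variables, and the inductive hypothesis only guarantees $\sdepth_{S'}(I') \geq \depth_{S'}(I')$, which can be exactly one less than what is required (the same shortfall occurs for $S/I$ via $S'/I'$). There is no general device for ``fusing'' Stanley spaces of $I'S'$ with those of $x_kI''S$ to gain the missing dimension; the reason the analogous induction works in Theorem \ref{decom} is the extra hypothesis that every facet of $\mathrm{del}_\Delta(k)$ is a facet of $\Delta$, which forces $\depth_{S'}(I'S') \geq \depth_S(I)$ and absorbs the loss. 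Without such structural input your induction cannot start in the hard case. (A smaller issue: the depth lemma alone does not give both $\depth_S(S/I'')\geq n-2$ and $\depth_{S'}(S'/I')\geq n-3$ from $\depth_S(S/I)\geq n-2$; you additionally need the fact that $\depth S/(I:x_k) \geq \depth S/I$ for monomial ideals.)

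The paper's proof avoids induction on $n$ entirely and instead splits into cases by $\dim(S/I)$. If $\dim(S/I)=\depth(S/I)=n-1$, then $I$ is principal and the claim is immediate. If $\dim(S/I)=n-1$ but $\depth(S/I)=n-2$, some associated prime has height one, so $I\subseteq (x_k)$ and $I=x_k(I:x_k)$; since $I\cong (I:x_k)$ as $\mathbb{Z}^n$-graded modules and sdepth is preserved under this isomorphism, one inducts on $\sum_{u\in G(I)}\deg(u)$, which strictly drops. The essential case is $\dim(S/I)=\depth(S/I)=n-2$, i.e.\ $S/I$ Cohen--Macaulay of codimension $2$: there the statement for $S/I$ is the known result \cite[Proposition 2.4]{hsy}, and the statement for $I$ is obtained by polarizing, invoking the theorem of Ajdani and Soleyman Jahan that codimension-$2$ Cohen--Macaulay squarefree monomial ideals are Stanley--Reisner ideals of vertex decomposable complexes, applying Theorem \ref{decom}, and transferring back via the polarization results of \cite{ikm}. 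Your closing remark that one needs ``a separate direct argument exploiting the restrictive structure of monomial ideals with $\pdim(S/I)\leq 2$'' correctly identifies what is missing, but that argument is precisely the content of the lemma, so the proposal as it stands does not constitute a proof.
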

\begin{proof}
We use induction on $\sum_{u\in G(I)}{\rm deg}(u)$, where $G(I)$ is the set of minimal monomial generators of $I$. If $$\sum_{u\in G(I)}{\rm deg}(u)=1,$$then $I$ is a principal ideal. Therefore, it follows from \cite[Theorem
1.1]{r} that ${\rm sdepth}(S/I)=n-1$. On the other hand, it is clear that ${\rm sdepth}(I)=n$. Thus, $I$ and $S/I$ satisfy Stanley's conjecture. Now, we consider the following cases.

{\bf Case 1.} ${\rm dim}(S/I)={\rm depth}(S/I)= n-2$. In this case $S/I$ is Cohen--Macaulay and the height of $I$ is equal to $2$. Thus $S/I$ satisfies Stanley's conjecture by \cite[Proposition 2.4]{hsy}. To prove that $I$ satisfies Stanley's conjecture, let $I^p$ denote the polarization of $I$ which is considered in a new polynomial ring, say $T$ (see \cite{hh'} for the definition of polarization). Then by \cite[Corollary 1.6.3]{hh'}, we conclude that $T/I^p$ is Cohen--Macaulay and the height of $I^p$ is equal to $2$. It follows from \cite[Theorem 2.3]{as} (see also \cite{m}) that $I^p$ is the Stanley--Reisner ideal of a vertex decomposable simplicial complex and therefore, $I^p$ satisfies Stanley's conjecture by Theorem \ref{decom}. Now \cite[Corollary 4.5]{ikm} implies that $I$ satisfies Stanley's conjecture.\\

{\bf Case 2.} ${\rm dim}(S/I)={\rm depth}(S/I)= n-1$. In this case, the height of every associated prime of $I$ is equal to one. Thus, $I$ is a principal ideal. Therefore, it follows from \cite[Theorem
1.1]{r} that ${\rm sdepth}(S/I)=n-1$. On the other hand, it is clear that ${\rm sdepth}(I)=n$. Thus, $I$ and $S/I$ satisfy Stanley's conjecture.\\

{\bf Case 3.} ${\rm dim}(S/I)=n-1$ and ${\rm depth}(S/I)= n-2$. In this case, the height of at least one of the associated primes of $I$ is equal to one. Hence, there exists a variable, say $x_k$, such that $I\subset (x_k)$. Thus, $I=x_k(I:x_k)$. This shows that $I$ and $(I:x_k)$ are isomorphic (as $\mathbb{Z}^n$-graded $S$-module). Thus ${\rm depth}(I)={\rm depth}((I:x_k))$, which implies that ${\rm depth}(S/I)={\rm depth}(S/(I:x_k))$. On the other hand, it follows from \cite[Theorem 1.1]{c1} that ${\rm sdepth}(I)={\rm sdepth}((I:x_k))$ and ${\rm sdepth}(S/I)={\rm sdepth}(S/(I:x_k))$. Hence, the induction hypothesis implies $${\rm sdepth}(I)={\rm sdepth}((I:x_k))\geq {\rm depth}((I:x_k))={\rm depth}(I).$$Similarly, ${\rm sdepth}(S/I)\geq {\rm depth}(S/I)$. Therefore, $I$ and $S/I$ satisfy Stanley's conjecture.

\end{proof}

Let $I$ be a monomial ideal. In the following theorem, we prove that $I$ and $S/I$ satisfies Stanley's conjecture, if the lcm number of $I$ or the order diension of $L_I$ is small.

\begin{thm} \label{degconj}
Let $I$ be a monomial ideal of $S=\mathbb{K}[x_1,\ldots,x_n]$.
If $l(I)\leq 3$ or $\dim L_I \leq 3$ holds, then $I$ and $S/I$ satisfy Stanley's conjecture.
\end{thm}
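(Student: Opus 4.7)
The plan is to combine the Stanley-depth lower bounds established in Corollary \ref{lcm} and Theorem \ref{thm:dim} with the small-codimension criterion provided by Lemma \ref{codim}. Regardless of which of the two hypotheses is assumed, I would first extract the bounds
\[ \sdepth_S(I) \geq n - 2 \quad \text{and} \quad \sdepth_S(S/I) \geq n - 3. \]
If $l(I) \leq 3$, these follow directly from Corollary \ref{lcm}. If $\dim L_I \leq 3$, the first follows from Theorem \ref{thm:dim}, and the second follows from the same theorem applied to $S/I$ together with the easy observation that $\dim L_{S/I} \leq \dim L_I$. This last inequality holds because $L_{S/I}$ is obtained from $L_I$ by inserting the extra element $1 \in G(S)$ as a single new element sitting just above $\hat{0}$ and below the rest of $L_I$, and adjoining such an element can always be accommodated within the same number of coordinates of an embedding into $\NN^d$.

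Next, I would case-split on $\depth_S(S/I)$. In the first case, $\depth_S(S/I) \geq n - 2$, and Lemma \ref{codim} directly yields Stanley's conjecture for both $I$ and $S/I$, completing the argument. In the second case, $\depth_S(S/I) \leq n - 3$, and applying the Auslander--Buchsbaum formula to the short exact sequence $0 \to I \to S \to S/I \to 0$ gives $\depth_S(I) = \depth_S(S/I) + 1 \leq n - 2$. Combined with the sdepth bounds from the first paragraph, this produces
\[ \sdepth_S(I) \geq n - 2 \geq \depth_S(I) \quad \text{and} \quad \sdepth_S(S/I) \geq n - 3 \geq \depth_S(S/I), \]
which is precisely what Stanley's conjecture requires.

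In short, there is no serious obstacle: the numerical cut-off $l(I) \leq 3$ (respectively $\dim L_I \leq 3$) is calibrated so that the new lower bounds on Stanley depth proved in Sections \ref{sec2} and \ref{secDim} just meet the depth threshold at which Lemma \ref{codim} takes over. The only minor point requiring explicit verification is the inequality $\dim L_{S/I} \leq \dim L_I$ used above, which is immediate from the definition of order dimension and the structure of $L_{S/I}$.
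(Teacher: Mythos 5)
Your proposal is correct and follows essentially the same route as the paper: derive $\sdepth(I)\geq n-2$ and $\sdepth(S/I)\geq n-3$ from Corollary \ref{lcm} resp.\ Theorem \ref{thm:dim}, then split on whether $\depth(S/I)\leq n-3$ (where the bounds already suffice, using $\depth(I)=\depth(S/I)+1$) or $\depth(S/I)\geq n-2$ (where Lemma \ref{codim} applies). Your explicit verification that $\dim L_{S/I}\leq \dim L_I$ is a small point the paper leaves implicit, and it is correct.
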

\begin{proof}
It follows from Corollary \ref{lcm} resp. Theorem \ref{thm:dim} that ${\rm sdepth}(S/I)\geq n-3$ and ${\rm sdepth}(I)\geq n-2$. This implies that if ${\rm depth}(S/I)\leq n-3$, then $I$ and $S/I$ satisfy Stanley's conjecture. Otherwise, the assertions follow from Lemma \ref{codim}.
\end{proof}

In the following corollary, we consider the Gorenstein monomial ideals with lcm number or order dimension at most four.

\begin{cor} \label{gore}
Let $I$ be a monomial ideal of $S=\mathbb{K}[x_1,\ldots,x_n]$ such that $S/I$ is Gorenstein.
If $l(I)\leq 4$ or $\dim L_I \leq 4$, then $I$ and $S/I$ satisfy Stanley's conjecture.
\end{cor}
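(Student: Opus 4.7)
By Corollary~\ref{lcm} (in the case $l(I)\leq 4$) or Theorem~\ref{thm:dim} (in the case $\dim L_I\leq 4$), we obtain the bounds $\sdepth(S/I)\geq n-4$ and $\sdepth(I)\geq n-3$. Since $S/I$ is Gorenstein it is Cohen--Macaulay, so $\depth(S/I)=n-\mathrm{height}(I)$ and $\depth(I)=\depth(S/I)+1$. The plan is to split the verification of Stanley's conjecture according to $\mathrm{height}(I)$. When $\mathrm{height}(I)\geq 4$, both depths are at most the corresponding Stanley-depth bounds, so Stanley's conjecture is immediate. When $\mathrm{height}(I)\leq 2$, we have $\depth(S/I)\geq n-2$ and Lemma~\ref{codim} applies. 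The only remaining, genuinely new case is $\mathrm{height}(I)=3$: here $\depth(S/I)=n-3$ and $\depth(I)=n-2$, exactly one more than what our Stanley-depth bounds provide, so a separate argument is required.

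To settle this critical case I would imitate Case~1 in the proof of Lemma~\ref{codim}. Let $I^{p}\subset T$ be the polarization of $I$. Since polarization preserves both the Hilbert series and the Cohen--Macaulay property, it preserves the Gorenstein property, so $T/I^{p}$ is a squarefree Gorenstein quotient of codimension three. If one can show that $I^{p}$ is the Stanley--Reisner ideal of a vertex decomposable simplicial complex, then Theorem~\ref{decom} gives Stanley's conjecture for $I^{p}$, and the conjecture descends to $I$ through \cite[Corollary~4.5]{ikm}; an analogous polarization-descent argument, combined with the fact that vertex decomposability implies Stanley's conjecture for the face ring, handles $S/I$.

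The principal obstacle is therefore the vertex decomposability statement. The Stanley--Reisner complex of a codimension-three Gorenstein squarefree monomial ideal is the join of a simplex with its Gorenstein$^{*}$ core, which is a triangulation of a homology $d$-sphere on exactly $d+4$ vertices. Such minimal-vertex homology spheres form a very restricted family, and I would expect a classical structure theorem (or a short ad-hoc argument using links and deletions) to guarantee that they are all vertex decomposable; since joining with a simplex preserves vertex decomposability, the full Stanley--Reisner complex is then also vertex decomposable. Establishing or citing this structural fact is the main technical step; once it is granted, the corollary follows from the routine case analysis above.
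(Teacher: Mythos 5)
Your case analysis and overall strategy coincide with the paper's: the bounds from Corollary~\ref{lcm} and Theorem~\ref{thm:dim} dispose of $\mathrm{height}(I)\geq 4$, Lemma~\ref{codim} disposes of $\mathrm{height}(I)\leq 2$, and the critical case is $\mathrm{height}(I)=3$, which you attack exactly as the paper does, via polarization and vertex decomposability. The one genuine gap is the step you flag yourself: that the Stanley--Reisner complex of a squarefree Gorenstein ideal of codimension $3$ is vertex decomposable. This cannot be left to ``a classical structure theorem or a short ad-hoc argument'' --- it is a nontrivial recent result, and the paper cites it as \cite[Theorem 2.5]{as} (Ajdani and Soleyman Jahan, on vertex decomposability of $2$-CM and Gorenstein simplicial complexes of codimension $3$). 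Your heuristic (the Gorenstein$^{*}$ core is a homology $d$-sphere on $d+4$ vertices, and such spheres are very restricted) is indeed the idea behind that result, but vertex decomposability is not automatic even for polytopal spheres, so the citation (or a full proof) is genuinely required at this point.

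A smaller divergence: for the quotient $S/I$ in the height-$3$ case the paper does not use polarization at all; it invokes \cite[Theorem 3.1]{hsy}, which establishes Stanley's conjecture for Gorenstein quotients of codimension $3$ directly. Your alternative --- deduce the conjecture for the polarized quotient from vertex decomposability and descend --- also works, but note that Theorem~\ref{decom} of the paper only treats the ideal $I_{\Delta}$, not the face ring, so you would additionally need the (true, but not established in this paper) fact that vertex decomposable complexes are nonpure shellable and hence have clean, in particular pretty clean, Stanley--Reisner rings. With the reference to \cite[Theorem 2.5]{as} supplied, your argument is complete and essentially the paper's.
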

\begin{proof}
By Corollary \ref{lcm} resp. Theorem \ref{thm:dim}, we conclude that ${\rm sdepth}(S/I)\geq n-4$ and ${\rm sdepth}(I)\geq n-3$. Thus, there is nothing to prove, if ${\rm depth}(S/I)\leq n-4$. If ${\rm depth}(S/I)\geq n-2$, then the assertions follow from Lemma \ref{codim}. Thus, we assume that ${\rm depth}(S/I)=n-3$. In this case, the height of $I$ is equal to $3$ and it follows from \cite[Theorem 3.1]{hsy} that $S/I$ satisfies Stanley's conjecture. In order to prove that $I$ satisfies Stanley's conjecture, we use the machinery of Polarization. Let $I^p$ denote the polarization of $I$ which is considered in a new polynomial ring, say $T$. Then by \cite[Corollary 1.6.3]{hh'}, we conclude that $T/I^p$ is Gorenstein and the height of $I^p$ is equal to $3$. Using \cite[Theorem 2.5]{as}, we conclude that $I$
is the Stanley--Reisner ideal of a vertex decomposable simplicial complex and therefore, $I^p$ satisfies Stanley's conjecture by Theorem \ref{decom}. Now \cite[Corollary 4.5]{ikm} implies that $I$ satisfies Stanley's conjecture.
\end{proof}

As an immediate consequence of Proposition \ref{lcmindeg}, Theorem \ref{degconj} and Corollary \ref{gore} we obtain the following result.

\begin{cor} \label{sf}
Let $I$ be a squarefree monomial ideal of $S=\mathbb{K}[x_1,\ldots,x_n]$. Then $I$ and $S/I$ satisfy Stanley's conjecture if
\begin{itemize}
\item[(i)] ${\rm indeg}(I)\geq n-2$ or
\item[(ii)] $S/I$ is Gorenstein and ${\rm indeg}(I)\geq n-3$
\end{itemize}
\end{cor}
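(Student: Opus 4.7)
The statement is labeled an ``immediate consequence'' of Proposition \ref{lcmindeg}, Theorem \ref{degconj}, and Corollary \ref{gore}, so the plan is simply to chain these together via the bound on the lcm number in terms of the initial degree. There is no real obstacle; the only thing to check is that the arithmetic works out on the nose in each case.

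The plan is as follows. First, I recall from Proposition \ref{lcmindeg} that for any squarefree monomial ideal $I \subseteq S$ one has the inequality $l(I) \leq n - {\rm indeg}(I) + 1$. This converts a hypothesis on the initial degree into a hypothesis on the lcm number, which is exactly the invariant controlled by the earlier theorems.

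For part (i), assume ${\rm indeg}(I) \geq n-2$. Then $l(I) \leq n - (n-2) + 1 = 3$, and Theorem \ref{degconj} immediately yields that both $I$ and $S/I$ satisfy Stanley's conjecture. For part (ii), assume $S/I$ is Gorenstein and ${\rm indeg}(I) \geq n-3$. Then $l(I) \leq n - (n-3) + 1 = 4$, and Corollary \ref{gore} again delivers the conclusion.

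Since both cases reduce to direct numerical substitution into a previously established bound followed by citation of the two main theorems of the section, nothing further is required. The ``hard'' work (the Gorenstein case leading through polarization to vertex decomposability, and the Cohen--Macaulay-codimension-$2$ case) has already been absorbed into Theorem \ref{degconj} and Corollary \ref{gore}, so this corollary is just a clean restatement of those results in terms of the more readily computed invariant ${\rm indeg}(I)$.
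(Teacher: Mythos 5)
Your proof is correct and is exactly the argument the paper intends: the corollary is stated as an immediate consequence of Proposition \ref{lcmindeg}, Theorem \ref{degconj} and Corollary \ref{gore}, and your substitution $l(I)\leq n-{\rm indeg}(I)+1\leq 3$ (resp. $\leq 4$) is the whole content. Nothing further is needed.
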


A simplicial complex $\Delta$ is called {\it doubly Cohen--Macaulay} if $\Delta$ is Cohen--Macaulay and for every vertex $x$ of $\Delta$, the simplicial complex ${\rm del_{\Delta}}(x)$ is Cohen--Macaulay of the same dimension as $\Delta$. The following corollary shows that $I_{\Delta}$ and $S/I_{\Delta}$ satisfy Stanley's conjecture if $\Delta$ is a doubly Cohen--Macaulay simplicial complex and the initial degree of $I_{\Delta}$ is large enough.

\begin{cor} \label{doubl}
Let $I$ be the Stanley--Reisner ideal of a doubly Cohen--Macaulay simplicial complex and assume that ${\rm indeg}(I_{\Delta})\geq n-3$. Then $I$ and $S/I$ satisfy Stanley's conjecture.
\end{cor}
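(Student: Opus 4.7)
Since $I_\Delta$ is squarefree with ${\rm indeg}(I_\Delta)\geq n-3$, Proposition \ref{lcmindeg} yields $l(I_\Delta)\leq 4$. The initial degree hypothesis also forces every subset of $[n]$ of size at most $n-4$ to be a face of $\Delta$, so $\dim\Delta\geq n-5$; as $\Delta$ is doubly Cohen--Macaulay and therefore Cohen--Macaulay, ${\rm depth}(S/I_\Delta)=\dim\Delta+1\geq n-4$. If ${\rm indeg}(I_\Delta)\geq n-2$, Proposition \ref{lcmindeg} gives $l(I_\Delta)\leq 3$ and Theorem \ref{degconj} concludes, so one may assume ${\rm indeg}(I_\Delta)=n-3$ in what follows.

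By Corollary \ref{lcm} one has ${\rm sdepth}(S/I_\Delta)\geq n-4$ and ${\rm sdepth}(I_\Delta)\geq n-3$, and the standard depth-lemma computation yields ${\rm depth}(I_\Delta)={\rm depth}(S/I_\Delta)+1$. Splitting on ${\rm depth}(S/I_\Delta)$: if it equals $n-4$, the $\rm sdepth$ bounds already dominate both depths and Stanley's conjecture holds directly; if it is at least $n-2$, one applies Lemma \ref{codim}. The only remaining case is ${\rm depth}(S/I_\Delta)=n-3$, where $\dim\Delta=n-4$ and the height of $I_\Delta$ equals $3$, and so $\Delta$ is a pure $(n-4)$-dimensional doubly Cohen--Macaulay complex whose $(n-5)$-skeleton is the full $(n-5)$-skeleton of the $(n-1)$-simplex.

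In this critical case I would mirror the Gorenstein argument of Corollary \ref{gore}: there, \cite[Theorem 3.1]{hsy} together with \cite[Theorem 2.5]{as} provided Stanley's conjecture by exhibiting a vertex-decomposable structure. For doubly Cohen--Macaulay complexes of height $3$ with complete $(n-5)$-skeleton, my plan is to show that $\Delta$ is vertex decomposable --- choosing a shedding vertex $v$ for which ${\rm del}_\Delta(v)$ retains the hypotheses (complete $(n-5)$-skeleton on $[n]\setminus\{v\}$ and doubly Cohen--Macaulayness in the inductive shape), and for which ${\rm lk}_\Delta(v)$ inherits analogous properties and is vertex decomposable by induction on $n$ --- and then to invoke Theorem \ref{decom} to obtain Stanley's conjecture for $I_\Delta$, with the statement for $S/I_\Delta$ following by a parallel argument. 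The main obstacle is establishing this structural claim, which is in effect the doubly Cohen--Macaulay analog of \cite[Theorem 2.5]{as} in codimension $3$; selecting the shedding vertex so that both ${\rm del}_\Delta(v)$ and ${\rm lk}_\Delta(v)$ simultaneously satisfy the inductive hypotheses is where the heavy combinatorial lifting lies.
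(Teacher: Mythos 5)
Your case analysis matches the paper's exactly: reduce via Proposition \ref{lcmindeg} and Corollary \ref{lcm} to the single critical case ${\rm depth}(S/I)=n-3$, where $I$ has height $3$, handling ${\rm depth}(S/I)\geq n-2$ by Lemma \ref{codim} and ${\rm depth}(S/I)\leq n-4$ by the sdepth bounds. The problem is that in the critical case you do not actually give a proof: you announce a \emph{plan} to show that a doubly Cohen--Macaulay complex of codimension $3$ is vertex decomposable, and you yourself flag the selection of a shedding vertex compatible with both the deletion and the link as ``where the heavy combinatorial lifting lies.'' That structural claim is precisely the content of the cited result \cite[Theorem 2.13]{as} (vertex decomposability of doubly CM complexes of codimension $3$), which the paper invokes directly; without either proving it or citing it, the argument for $I$ is incomplete. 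This is a genuine gap, not a stylistic difference.

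A second, independent gap concerns $S/I$ in the critical case. Theorem \ref{decom} yields Stanley's conjecture only for the ideal $I_\Delta$, not for the quotient $S/I_\Delta$ (vertex decomposability gives no direct handle on $\sdepth(S/I_\Delta)$ in this paper), so ``the statement for $S/I_\Delta$ following by a parallel argument'' does not go through as stated. The paper instead settles $S/I$ in this case by citing \cite[Theorem 4.2]{nr} for doubly Cohen--Macaulay complexes of height $3$. You would need either that reference or a substitute argument for the quotient.
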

\begin{proof}
By Corollary \ref{lcm}, we conclude that ${\rm sdepth}(S/I)\geq n-4$ and ${\rm sdepth}(I)\geq n-3$. Thus, the assertion is true, if ${\rm depth}(S/I)\leq n-4$. If ${\rm depth}(S/I)=n-3$, then the height of $I$ is equal to $3$. In this case, $S/I$ satisfies Stanley's conjecture by \cite[Theorem 4.2]{nr}. On the other hand, it follows from \cite[Theorem 2.13]{as} that $I$ is the Stanley--Reisner ideal of a vertex decomposable simplicial complex and therefore, $I$ satisfies Stanley's conjecture by Theorem \ref{decom}. The remaining case (${\rm depth}(S/I)\geq n-2$) follows from Lemma \ref{codim}.
\end{proof}

%%%%%%%%%%%%%%%%%%%%%%%%%%%%%%%%%%%%%%%%%%%%%%%%%%%%%%%%%%%%%%%%%%%%%%%%%%

\section*{Acknowledgment}
The authors wish to thank Christian Bey for pointing out the reference \cite{rw} to us.

%%%%%%%%%%%%%%%%%%%%%%%%%%%%%%%%%%%%%%%%%%%%%%%%%%%%%%%%%%%%%%%%%%%%%%%%%%

%%%%%%%%%%%%%%%%%%%%%%%%%%%%%%%%%%%%%%%%%%%%%%%%%%%%%%%%%%%%%%%%%%%%%%%%%%

\end{document}